 \theoremstyle{plain}
 \newtheorem{Thm}{Theorem}
 \newtheorem{Lemma}[Thm]{Lemma}
 \newtheorem{Prop}[Thm]{Proposition}
 \theoremstyle{definition}
\def\text#1{\;\;\;\;{\rm \hbox{#1}}\;\;\;\;}
\def\qquad{\quad\quad}
\def\msy#1{{\mathbb #1}}
\def\C{{\msy C}}
\def\D{{\msy D}}
\def\ga{\alpha}
\def\gf{\varphi}
\def\gl{\lambda}
\def\gs{\sigma}
\def\gS{\Sigma}
\def\fa{{\mathfrak a}}
\def\fg{{\mathfrak g}}
\def\fh{{\mathfrak h}}
\def\fk{{\mathfrak k}}
\def\fm{{\mathfrak m}}
\def\fn{{\mathfrak n}}
\def\fp{{\mathfrak p}}
\def\fq{{\mathfrak q}}
\def\to{\rightarrow}
\def\Re{\mathrm{Re}\,}
\def\End{\mathrm{End}}
\def\Hom{\mathrm{Hom}}
\def\ad{\mathrm{ad}}
\def\tr{\mathrm{tr}\,}
\DeclareMathOperator*{\Res}{Res}
\def\Restau{\mathrm{Res}_{\tau}}
\def\Restriv{\mathrm{Res}_{\1_{K}}}
\def\spn{\mathrm{span}}
\def\cA{{\mathcal A}}
\def\cC{{\mathcal C}}
\def\cH{{\mathcal H}}
\def\cI{{\mathcal I}}
\def\cP{{\mathcal P}}
\def\cR{{\mathcal R}}
\def\cW{{\mathcal W}}
\def\oC{\cA_{M,2}}
\def\Lie{\mathop{\rm Lie}}
\def\Cartan{\theta}
\def\Vtau{V_\tau}
\def\dotvar{\, \cdot\,}
\def\Ht{\cH}
\def\It{\cI}
\def\Rt{\cR}
\def\Cen{\mathcal{Z}}
\def\Nor{\mathcal{N}}
\def\1{\mathbf{1}}
\def\cusp{\mathrm{cusp}}
\def\res{\mathrm{res}}
\def\ds{\mathrm{ds}}
 \title{$K$-invariant cusp forms for reductive symmetric spaces of split rank one}
 \author{Erik~P.~van den Ban,
         Job~J.~Kuit\footnote{Funded by the Deutsche Forschungsgemeinschaft (DFG, German Research Foundation) -- 262362164.}\ \ and
         Henrik~Schlichtkrull}
\date{}
\begin{document}
 \maketitle
 \vspace{-15pt}
\begin{abstract}
Let $G/H$ be a reductive symmetric space of split rank $1$ and let $K$ be a maximal compact subgroup of $G$. In a previous article the first two authors introduced a notion of cusp forms for $G/H$. We show that the space of cusp forms coincides with the closure of the $K$-finite generalized matrix coefficients of discrete series representations if and only if there exist no $K$-spherical discrete series representations. Moreover, we prove that every $K$-spherical discrete series representation occurs with multiplicity $1$ in the Plancherel decomposition of $G/H$.
\end{abstract}
\section*{Introduction}\addcontentsline{toc}{section}{Introduction}
By refining a suggestion of M. Flensted-Jensen the first two authors introduced a notion of cusp forms for reductive symmetric spaces of split rank $1$ in \cite{vdBanKuit_HC-TransformAndCuspForms}. For reductive groups of split rank $1$ this definition of cusp forms coincides with Harish-Chandra's definition. It further generalizes the definition of cusp forms for hyperbolic spaces given in \cite{AndersenFlenstedJensenSchlichtkrull_CuspidalDiscreteSerieseForSemisimpleSymmetricSpaces} and \cite{AndersenFlensted-Jensen_CuspidalDiscreteSeriesForProjectiveHyperbolicSpaces}. The definition of cusp forms does not straightforwardly generalize to reductive symmetric spaces of higher split rank as the cuspidal integrals are not always convergent, see \cite[Section 4]{FlenstedJensenKuit_CuspidalIntegrals}.

Let $G/H$ be a reductive symmetric space of split rank one. We write $\cC(G/H)$ for the space of Harish-Chandra Schwartz functions on $G/H$. In \cite{vdBanKuit_HC-TransformAndCuspForms} a class $\cP_{\fh}$ of minimal parabolic subgroups is identified such that the cuspidal integrals
$$
\Rt_Q\phi (g):= \int_{N_Q/N_Q \cap H} \phi(gn)\; dn        \qquad (g \in G)
$$
are absolutely convergent for every $Q \in \cP_\fh$ and $\phi \in \cC(G/H)$. Here $N_{Q}$ is the unipotent radical of $Q$.
A function $\phi \in \cC(G/H)$ is said to be a cusp form if $\Rt_Q\phi=0$ for all $Q \in \cP_\fh$.
Let $\cC_\cusp(G/H)$ denote the space of cusp forms and let $\cC_{\ds}(G/H)$ be the closure in $\cC(G/H)$ of the span of $K$-finite generalized matrix coefficients of discrete series representations for $G/H$. It is shown in \cite[Theorem 8.20]{vdBanKuit_HC-TransformAndCuspForms} that
$$
\cC_\cusp(G/H) \subseteq \cC_\ds(G/H).
$$

Let $K$ be a maximal compact subgroup of $G$ so that $K\cap H$ is a maximal compact subgroup of $H$. For a finite set $\vartheta$ of irreducible unitary representations of $K$ we write $\cC(G/H)_{\vartheta}$ for the subspace of $\cC(G/H)$ of $K$ finite functions with $K$-isotypes contained in $\vartheta$. In \cite[Theorem 8.4]{vdBanKuit_HC-TransformAndCuspForms} it is established that
$\cC_{\ds}(G/H)_{\vartheta}:=\cC_{\ds}(G/H)\cap\cC(G/H)_{\vartheta}$ admits an $L^2$-orthogonal decomposition
$$
\cC_\ds(G/H)_{\vartheta} = \cC_\cusp(G/H)_{\vartheta} \oplus \cC_{\res}(G/H)_{\vartheta},
$$
where $\cC_\res(G/H)_{\vartheta}$ is spanned by certain residues of Eisenstein integrals defined in terms of parabolic subgroups in $\cP_{\fh}$.

It is a fundamental result of Harish-Chandra that for reductive Lie groups no residual discrete series representation occur, i.e., if $G$ is a reductive Lie group then
\begin{equation}\label{eq discr series = cusp forms}
\cC_\ds(G)= \cC_\cusp(G).
\end{equation}
See \cite{Harish-Chandra_DiscreteSeriesForSemisimpleLieGroupsII}, \cite[Thm. 10]{Harish-Chandra_HarmonicAnalysisOnSemisimpleLieGroups} and \cite[Sects. 18 \& 27]{Harish-Chandra_HarmonicAnalyisOnRealReductiveGroupsI}; see also \cite[Thm. 16.4.17]{Varadarajan_HarmonicAnalysisOnRealReductiveGroups}. In \cite[Theorem 8.22]{vdBanKuit_HC-TransformAndCuspForms} the following criterion was given for the analogue of (\ref{eq discr series = cusp forms}) for reductive symmetric spaces of split rank $1$,
\begin{equation}
\label{eq criterion ds is cusp}
\cC_\res(G/H)^K = 0 \;  \Rightarrow\;  \cC_\cusp(G/H) = \cC_\ds(G/H).
\end{equation}
The main result of this article is that this is actually an equivalence.

\begin{Thm}\label{Main theorem}
There exist no non-zero $K$-invariant cusp forms, i.e.,
\begin{equation}\label{eq C_ds(G/H)^K cap C_cusp(G/H)=0}
\cC_{\cusp}(G/H)^{K}=\{0\}.
\end{equation}
Moreover, the following are equivalent.
\begin{enumerate}[(i)]
\item
$\cC_{\ds}(G/H)=\cC_{\cusp}(G/H)$;
\item
$\cC_{\ds}(G/H)^{K}=\{0\}$.
\end{enumerate}
\end{Thm}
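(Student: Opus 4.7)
The equivalence of (i) and (ii) will follow swiftly once the first assertion $\cC_{\cusp}(G/H)^{K}=\{0\}$ is in hand. For (i) $\Rightarrow$ (ii), taking $K$-invariants in $\cC_{\ds}(G/H)=\cC_{\cusp}(G/H)$ and inserting the first assertion yields $\cC_{\ds}(G/H)^{K}=\cC_{\cusp}(G/H)^{K}=\{0\}$. For (ii) $\Rightarrow$ (i), the $L^{2}$-orthogonal decomposition of \cite[Theorem 8.4]{vdBanKuit_HC-TransformAndCuspForms} gives $\cC_{\res}(G/H)_{\vartheta}\subseteq \cC_{\ds}(G/H)_{\vartheta}$ for every finite $\vartheta$; specialising to $\vartheta=\{\1_{K}\}$ one obtains $\cC_{\res}(G/H)^{K}\subseteq \cC_{\ds}(G/H)^{K}=\{0\}$, and the criterion (\ref{eq criterion ds is cusp}) then delivers (i).

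The substantive work is therefore the identity $\cC_{\cusp}(G/H)^{K}=\{0\}$. The plan is to specialise the orthogonal decomposition of \cite[Theorem 8.4]{vdBanKuit_HC-TransformAndCuspForms} to the trivial $K$-type $\vartheta=\{\1_{K}\}$,
$$
\cC_{\ds}(G/H)^{K}=\cC_{\cusp}(G/H)^{K}\oplus \cC_{\res}(G/H)^{K},
$$
and to reduce the first assertion to proving the reverse inclusion $\cC_{\ds}(G/H)^{K}\subseteq \cC_{\res}(G/H)^{K}$. Concretely, the task becomes showing that every $K$-fixed generalised matrix coefficient of a discrete series representation for $G/H$ can be written as a linear combination of residues of $K$-spherical Eisenstein integrals attached to parabolic subgroups $Q\in \cP_{\fh}$.

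The natural tool for establishing this inclusion is the residue calculus for Eisenstein integrals underlying the Plancherel theorem of van den Ban and Schlichtkrull for $G/H$. In split rank one the normalised Eisenstein integrals $E^{\circ}(Q\col\psi\col\lambda)$ depend meromorphically on a one-dimensional parameter $\lambda\in\faqdc$, and their residues at specific singularities of the Plancherel density produce square-integrable eigenfunctions of the invariant differential operators on $\spX=G/H$, i.e.\ generalised matrix coefficients of discrete series representations. Restricting to the $K$-spherical setting, I would locate the poles of the normalised $K$-spherical Eisenstein integrals, compute the corresponding residues, and match them against the infinitesimal-character parameters of the $K$-spherical discrete series. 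If the residues realise each $K$-spherical discrete series in a one-dimensional subspace, one obtains both the inclusion $\cC_{\ds}(G/H)^{K}\subseteq \cC_{\res}(G/H)^{K}$ and the multiplicity-one statement announced in the abstract as a by-product.

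The main obstacle is precisely the final step: verifying that \emph{every} $K$-spherical discrete series is captured by a residue of a $K$-spherical Eisenstein integral, with no residual $K$-spherical cuspidal contribution surviving outside the residual space. This requires detailed information about the $c$-functions in split rank one, for which Harish-Chandra's explicit rank-one formulas and Flensted--Jensen duality are the appropriate inputs, together with careful bookkeeping interweaving the classification of $K$-spherical discrete series with the singularity structure of the Eisenstein integrals.
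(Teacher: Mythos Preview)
Your reduction of the equivalence (i)$\Leftrightarrow$(ii) to the first assertion is correct and matches the paper. Likewise, your reformulation of the first assertion as the inclusion $\cC_{\ds}(G/H)^{K}\subseteq \cC_{\res}(G/H)^{K}$ --- equivalently, that the span $V_{Q}$ of the residues $\Restriv(Q:\mu)(\psi)$ exhausts $\cC_{\ds}(G/H)^{K}$ --- is exactly the content of the paper's Proposition~\ref{Prop V_Q=C_ds(G/H)^K}.

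Where your proposal diverges is in the execution of this last step. You suggest classifying the $K$-spherical discrete series, locating the poles of the $K$-spherical Eisenstein integrals, and then matching residues to discrete series via infinitesimal characters; you correctly flag this matching as the main obstacle. The paper bypasses this entirely. Instead of working representation by representation, it invokes the explicit formula for the orthogonal projection $T_{\ds}:C_{c}^{\infty}(G/H)^{K}\to\cC_{\ds}(G/H)^{K}$ coming from the Fourier inversion theorem of van den Ban--Schlichtkrull (\cite{vdBanSchlichtkrull_FourierInversionOnAReductiveSymmetricSpace}, \cite{vdBan&Schlichtkrull_ThePlancherelDecompositionForAReductiveSymmetricSpaceI}). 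That formula already expresses $T_{\ds}\phi$ as a finite sum of residues of \emph{normalized} Eisenstein integrals $E^{\circ}(\bar P_{0}:\psi:-\lambda)$, so the image of $T_{\ds}$, which is all of $\cC_{\ds}(G/H)^{K}$, is automatically contained in the span of those residues. The remaining work (Lemma~\ref{Lemma V_Q compared to kernel}) is to check that the residues of $E^{\circ}(\bar P_{0}:\dotvar:-\lambda)$ and of the unnormalized $E_{\1_{K}}(Q:\dotvar:-\lambda)$ span the same space; this comes down to the partial $c$-function relating the two being holomorphic and strictly positive on $\fa_{\fq}^{*+}$, which follows from its integral representation since $Q$ is $\fh$-compatible.

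So your strategy is sound, but the classification-and-matching route you outline is unnecessary and, as you note, hard to complete. The missing ingredient is the $T_{\ds}$ formula: it gives the inclusion $\cC_{\ds}(G/H)^{K}\subseteq V_{Q}$ for free, without any appeal to the classification of discrete series or Flensted--Jensen duality.
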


The analysis needed for the proof of Theorem \ref{Main theorem} is further used to prove the following theorem, which confirms some special cases of the multiplicity one result of \cite{Bien_DModulesAndSphericalRepresentations}, page 3, Theorem 3.

\begin{Thm}\label{Thm multiplicity 1}
Let $G/H$ have split rank $1$.
Every $K$-spherical discrete series representation occurs with multiplicity $1$ in the Plancherel decomposition of $G/H$.
\end{Thm}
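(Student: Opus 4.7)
The plan is to combine Theorem~\ref{Main theorem} with the structure of the $K$-invariant residual space provided by the decomposition
$$
\cC_\ds(G/H)_{\vartheta} = \cC_\cusp(G/H)_{\vartheta}\oplus \cC_\res(G/H)_{\vartheta}
$$
of \cite[Theorem~8.4]{vdBanKuit_HC-TransformAndCuspForms}. Taking $\vartheta=\{\1_{K}\}$ and applying Theorem~\ref{Main theorem} gives $\cC_\ds(G/H)^{K}=\cC_\res(G/H)^{K}$, so computing the Plancherel multiplicity of a $K$-spherical discrete series $\pi$ reduces to computing the dimension of the $\pi$-isotypic subspace of $\cC_\res(G/H)^{K}$.

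Fix a $K$-spherical discrete series representation $\pi$ of $G/H$ with unit $K$-fixed vector $v_{0}$. For each $\eta\in V_{\pi}^{-\infty,H}$ the matrix coefficient $m_{\eta}(gH):=\inp{\eta}{\pi(g^{-1})v_{0}}$ lies in $\cC_\ds(G/H)^{K}$; the assignment $\eta\mapsto m_{\eta}$ is injective because $v_{0}$ is cyclic, and it exhausts the $\pi$-isotypic component of $\cC_\ds(G/H)^{K}$ because $\dim V_{\pi}^{K}=1$. Thus
$$
m(\pi)=\dim V_{\pi}^{-\infty,H}=\dim\bigl(\cC_\res(G/H)^{K}\bigr)_{\pi},
$$
and the goal is to show that this dimension equals one.

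By construction, $\cC_\res(G/H)^{K}$ is spanned by residues of $K$-invariant Eisenstein integrals $\nE(Q\col\psi\col\lambda)$ taken at poles $\lambda\in i\faQ^{*}$, with $Q\in\cP_{\fh}$ and $\psi$ ranging over an appropriate discrete spectrum attached to the Levi factor. Since $G/H$ has split rank one, $\faQ^{*}\iso\C$, and these are residues of meromorphic functions of a single complex variable. The imaginary poles correspond bijectively to the $K$-spherical discrete series of $G/H$, and the residue at the pole $\lambda_{\pi}$ associated with $\pi$ produces a non-zero $K$-invariant, $H$-fixed matrix coefficient of $\pi$.

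The principal obstacle is to verify that this residue is simple and spans only a one-dimensional subspace of $\cC_\res(G/H)^{K}$, so that it produces (up to scalar) a single distinguished $\eta_{\pi}\in V_{\pi}^{-\infty,H}$. After restriction to the trivial $K$-type this amounts to a one-variable residue analysis of the scalar $c$-function $c_{Q\sepbar Q}(\1_{K}\col\lambda)$ at $\lambda=\lambda_{\pi}$: the rank-one structure of the $c$-function data established in \cite{vdBanKuit_HC-TransformAndCuspForms}, together with the uniqueness of the $K$-fixed line in $V_{\pi}$, forces the pole to be simple and the residual subspace to be one-dimensional. This yields $m(\pi)=1$.
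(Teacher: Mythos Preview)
Your outline correctly identifies that $m(\pi)=\dim\cC_{\pi}(G/H)^{K}$ and that this space sits inside the residual space. But the decisive step---showing that the $\pi$-isotypic part of $\cC_{\res}(G/H)^{K}$ is one-dimensional---is not carried out, and the justification you offer does not work.

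The parameter space $\oC(\1_{K})=\bigoplus_{v\in\cW}C^{\infty}(M/M\cap vHv^{-1}\col\1_{M}^{0})$ has dimension $|\cW|$, not $1$. Thus for a fixed pole $\mu$ the space
\[
\spn\bigl\{\Restriv(Q\col\mu)(\psi):\psi\in\oC(\1_{K})\bigr\}
\]
has dimension \emph{a priori} bounded by $|\cW|$, and simplicity of the pole (a statement about the order in the single complex variable $\lambda$) says nothing about linear relations among the residues as $\psi$ varies. When $|\cW|=2$ the two residues $\Restriv(Q\col\mu)(\delta_{e})$ and $\Restriv(Q\col\mu)(\delta_{v_{0}})$ could in principle be independent, giving $m(\pi)=2$. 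Your appeal to ``uniqueness of the $K$-fixed line in $V_{\pi}$'' is circular: $\dim V_{\pi}^{K}=1$ is exactly what converts the multiplicity $m(\pi)$ into $\dim\cC_{\pi}(G/H)^{K}$, but it cannot then also bound the latter by $1$.

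The paper handles the case $|\cW|=2$ by an additional argument: inserting $\phi=\Restriv(Q\col\mu)(\delta_{v})$ into the explicit kernel formula for $T_{\ds}$ (rewritten via the comparison Lemma between the $Q$- and $\bar P_{0}$-residues) one obtains
\[
\Restriv(Q\col\mu\col kawh)(\delta_{v})=c_{v,w}\,\Phi_{w}(\mu\col a)
\]
on each open cell $KA_{\fq}^{-}wH$. Real analyticity of discrete-series matrix coefficients then forces the constants $c_{v,w}$ to be independent of $w$, so the two residues are proportional as functions on all of $G/H$. This is the missing mechanism; nothing in the scalar $c$-function analysis you invoke produces it. (Incidentally, the relevant poles lie in the real positive chamber $\fa_{\fq}^{*+}$, not on $i\fa_{\fq}^{*}$.)
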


The article is organized as follows. We start by introducing the necessary notation in Section \ref{Section Notation}. In Sections \ref{Section tau-spherical cusp forms} and \ref{Section Formula for H_(Q,tau)} we set up the machinery needed for the proof of Theorem \ref{Main theorem}. The proof is given in Section \ref{Section proof}. Finally, Theorem \ref{Thm multiplicity 1} is proved in Section \ref{Section Multiplicity}.

\section{Notation and preliminaries}\label{Section Notation}

Throughout the paper, $G$ will be a reductive Lie group of the Harish-Chandra class, $\sigma$ an involution of $G$ and $H$ an open subgroup of the fixed point subgroup for $\sigma$.
We assume that $H$ is essentially connected as defined in \cite[p.\ 24]{vdBan_ConvexityThm}.
The involution of the Lie algebra $\fg$ of $G$ obtained by deriving $\gs$ is denoted by the same symbol.
Accordingly, we write $\fg=\fh\oplus\fq$ for the decomposition of $\fg$ into the $+1$ and $-1$-eigenspaces for $\sigma$. Thus, $\fh$ is the Lie algebra of $H$. Here and in the rest of the paper, we adopt the convention to denote Lie groups
by Roman capitals, and their Lie algebras by the corresponding Fraktur lower cases.

We fix a Cartan involution $\theta$ that commutes with $\sigma$ and write $\fg=\fk\oplus\fp$ for the corresponding decomposition of $\fg$ into the $+1$ and $-1$ eigenspaces for $\theta$. Let $K$ be the fixed point subgroup of $\theta$. Then $K$ is a $\sigma$-stable maximal compact subgroup with Lie algebra $\fk$.   In addition, we fix a maximal abelian subspace $\fa_{\fq}$ of $\fp\cap\fq$ and a maximal abelian subspace $\fa$ of $\fp$ containing $\fa_{\fq}$. Then $\fa$ is $\sigma$-stable and
$$
\fa=\fa_{\fq}\oplus\fa_{\fh},
$$
where $\fa_{\fh}=\fa\cap \fh$.
This decomposition allows us to identify $\fa_{\fq}^{*}$ and $\fa_{\fh}^{*}$ with the subspaces
$(\fa/\fh)^*$ and $(\fa/\fq)^*$ of $\fa^{*},$ respectively.

Let $A$ be the connected Lie group with Lie algebra $\fa$. We define $M$ to be the centralizer of $A$ in $K$. The  set of minimal parabolic subgroups containing $A$ is denoted by $\cP(A)$.

If $Q$ is a parabolic subgroup, then its nilpotent radical will be denoted by
$N_{Q}.$ Furthermore, we agree to write $\bar Q = \Cartan Q$ and $\bar N_Q = \Cartan N_Q.$
Note that if $Q\in\cP(A)$, then $MA$ is a Levi subgroup of $Q$ and $Q=MAN_{Q}$ is
the Langlands decomposition of $Q$.

The root system of $\fa$ in $\fg$ is denoted by $\Sigma = \gS(\fg,\fa).$
For $Q\in\cP(A)$ we put
$$
\Sigma(Q): = \{\ga \in \gS :  \fg_\ga \subseteq     \fn_Q\}.
$$
Let $\Cen_\fg(\fa_{\fq})$ denote the centralizer of $\fa_{\fq}$ in $\fg.$
We define the elements $\rho_Q $ and $\rho_{Q,\fh}$ of $\fa^{*}$ by
$$
\rho_Q(\dotvar) = \frac12 \tr(\ad(\dotvar)|_{\fn_Q}), \qquad {\rm and}\quad
\rho_{Q,\fh}(\dotvar) = \frac12 \tr(\ad(\dotvar)|_{\fn_Q \cap \Cen_\fg(\fa_{\fq})}).
$$
We say that $Q$ is $\fh$-compatible if
$$
\langle\alpha,\rho_{Q,\fh}\rangle\geq0\quad \textnormal{for all}\quad\alpha\in\Sigma(Q).
$$
We write $\cP_{\fh}(A)$ for the subset of $\cP(A)$ consisting of all $\fh$-compatible parabolic subgroups.

\section{$\tau$-Spherical cusp forms}\label{Section tau-spherical cusp forms}
Let $(\tau,V_{\tau})$ be a finite dimensional representation of $K$.
We write $C^{\infty}(G/H:\tau)$ for the space of smooth functions $\phi:G/H\to V_{\tau}$ satisfying the transformation rule
$$
\phi(kx)
=\tau(k)\phi(x)\qquad(k\in K,x\in G/H)
$$
and we write $\cC(G/H:\tau)$ for the space of $\phi\in C^{\infty}(G/H:\tau)$ that are Schwartz. (See \cite[Section 3.1]{vdBanKuit_HC-TransformAndCuspForms}.)

Let $W(\fa_{\fq})$ be the Weyl group of the root system of $\fg$ in $\fa_{\fq}$. Then $W(\fa_{\fq})$ can be realized as the quotient $W(\fa_{\fq})=\Nor_{K}(\fa_{\fq})/\Cen_{K}(\fa_{\fq})$. Let $W_{K\cap H}(\fa_{\fq})$ be the subgroup of $W(\fa_{\fq})$ of elements that can be realized in $\Nor_{K\cap H}(\fa_{\fq})$.
We choose a set $\cW$ of representatives of $W(\fa_{\fq})/W_{K\cap H}(\fa_{\fq})$ in $\Nor_{K}(\fa_{\fq})\cap\Nor_{K}(\fa_{\fh})$ such that $e \in \cW$. This is possible because of the identity
$$
\Nor_{K}(\fa_{\fq})
=\big(\Nor_{K}(\fa_{\fq})\cap \Nor_{K}(\fa_{\fh})\big)\Cen_{K}(\fa_{\fq}).
$$
See \cite{Rossmann_TheStructureOfSemisimpleSymmetricSpaces} at the top of page 165.

Let
$$
\fa_{0}
:=\bigcap_{\alpha\in\Sigma\cap \fa_{\fh}^{*}}\ker(\alpha)
$$
and define
$$
\fm_{0}
:=\Cen_{\fg}(\fa_{\fq})\cap\fa_{0}^{\perp}.
$$
Let $\fm_{0n}$ be the direct sum of all non-compact ideals in $\fm_{0}$ and let $M_{0n}$ be the connected subgroup of $G$ with Lie algebra $\fm_{0n}$.
We define $\tau_{M}$ to be the restriction of $\tau$ to $M$ and write $\tau_{M}^{0}$ for the subrepresentation of $\tau_{M}$ on $(V_{\tau})^{M_{0n}\cap K}$. We further define
$$
\oC(\tau)
:=\bigoplus_{v \in \cW}C^\infty(M/M\cap vHv^{-1}:\tau_{M}^{0}).
$$
We equip $\oC(\tau)$ with the natural Hilbert space structure and note that it is finite dimensional.

Given $v\in\cW$ and $Q\in\cP(A)$ we define the parabolic subgroup $Q^{v}\in\cP(A)$ by
$$
Q^{v}
:= v^{-1}Qv.
$$

Let $Q\in\cP_{\fh}(A)$. For $\phi\in\cC(G/H:\tau)$ define $\Ht_{Q,\tau}\phi:A_{\fq}\to \oC(\tau)$ to be the function given by
$$
\Big(\Ht_{Q,\tau}\phi(a)\Big)_{v}(m)
=a^{\rho_{Q}-\rho_{Q,\fh}}\int_{N_{Q^{v}}/H\cap N_{Q^{v}}}\phi(mavn)\,dn
\qquad(v\in\cW, m\in M, a\in A_{\fq}).
$$
By \cite[Theorem 5.12]{vdBanKuit_HC-TransformAndCuspForms} the integral is absolutely convergent for every $\phi\in\cC(G/H)$. Furthermore, the map $\Ht_{Q,\tau}:\cC(G/H:\tau)\to C^{\infty}(A_{\fq})\otimes\oC(\tau)$ thus obtained is continuous.
We call $\phi\in\cC(G/H:\tau)$ a $\tau$-spherical cusp form if for every $Q\in\cP_{\fh}(A)$
$$
\Ht_{Q,\tau}\phi=0.
$$

We will now describe the relation between the $\tau$-spherical cusp forms and the cusp forms defined in the previous section. Let $\vartheta$ be a finite subset of $\widehat{K}$. For a representation of $K$ on a vector space $V$, we denote the subspace of $K$-finite vectors with isotypes in $\vartheta$ by $V_{\vartheta}$.
Consider $C(K)$ equipped with the left-regular representation of $K$. Define $V_{\tau}:=C(K)_{\vartheta}$, i.e., let $V_{\tau}$ be the space of $K$-finite functions on $K$, whose isotopy types for the left regular representation are contained in $\vartheta$. We define $\tau$ to be the unitary representation of $K$ on $V_{\tau}$ obtained from the right action.
Then there is a canonical isomorphism
$$
\varsigma:\cC(G/H)_{\vartheta}\to\cC(G/H:\tau)
$$
given by
$$
\varsigma\phi(x)(k)=\phi(kx)
\qquad\big(\phi\in\cC(G/H)_{\vartheta}, k\in K, x\in G/H\big).
$$
By \cite[Remark 6.3]{vdBanKuit_HC-TransformAndCuspForms} we now have
$$
\varsigma\Big(\cC_{\cusp}(G/H)_{\vartheta}\Big)
=\cC_{\cusp}(G/H:\tau).
$$

\section{A formula for $\Ht_{Q,\tau}$}\label{Section Formula for H_(Q,tau)}
In \cite{vdBanKuit_EisensteinIntegrals} Eisenstein integrals were constructed which were then used in \cite{vdBanKuit_HC-TransformAndCuspForms} to derive a formula for $\Ht_{Q,\tau}$. This formula is very useful to analyze the relation between cusp forms and discrete series representations. We will now recall this formula and all relevant objects. For details we refer to the two mentioned articles.

We fix $Q\in\cP_{\fh}(A)$. We further choose a minimal $\sigma\theta$-stable parabolic subgroup $P_{0}$ containing $A$, with the property that $\Sigma(Q)\cap\sigma\theta\Sigma(Q)\subseteq\Sigma(P_{0})$. (It is easy to see that such a minimal $\sigma\theta$-stable parabolic subgroup always exists.)

Given $\psi \in \cA_{M,2}(\tau)$, $\lambda\in\fa_{\fq\C}^{*}$ and $v\in\cW$ we define
the function  $\psi_{v,Q,\lambda}: G \to V_{\tau}$ by
$$
\psi_{v,Q,\lambda} (kman ) = a^{\lambda-\rho_{Q}-\rho_{Q,\fh}}\,\tau(k) \psi_{v}(m).
$$
Let $\omega_{v}$ be a non-zero density on $\fh/\fh\cap\Lie(v^{-1}Qv)$.
If $-\langle\Re\lambda,\alpha\rangle$ is sufficiently large for every $\alpha\in \Sigma(Q)\cap\sigma\theta\Sigma(Q)$, then for each $x \in G$ and $v\in\cW$ the function
$$
h \mapsto \psi_{Q, \lambda}(xhv^{-1})\;dl_h(e)^{-1*}\omega
$$
defines an integrable $V_{\tau}$-valued density on $H/H\cap v^{-1}Qv$ (see \cite[Proposition 8.2]{vdBanKuit_EisensteinIntegrals}). For these $\lambda$ we define the Eisenstein integral $E_{\tau}(Q:\psi:\lambda)\in C^{\infty}(G/H:\tau)$
by
$$
E_{\tau}(Q:\psi:\lambda)(x)
:=\sum_{v\in\cW} \int_{H/H\cap v^{-1}Qv}\;
 \psi_{v,Q, \lambda}(xhv^{-1})\,dl_h(e)^{-1*}\,\omega_{v}, \quad (x \in G).
$$
The function $\lambda\mapsto E_{\tau}(Q:\psi:\lambda)$ extends to a meromorphic $C^{\infty}(G/H:\tau)$-valued function on $\fa_{\fq\C}^{*}$. This definition of Eisenstein integrals coincides with the definition in \cite[Section 8]{vdBanKuit_EisensteinIntegrals}.
We write $E_{\tau}(Q:\dotvar)$ for the map
$$
\oC(\tau)\ni\psi\mapsto E_{\tau}(Q:\psi:\dotvar).
$$

We define
$$
\fa_{\fq}^{*+}
=\fa_{\fq}^{*+}(P_{0})
:=\big\{\lambda\in\fa_{\fq}^{*}:\langle\lambda,\alpha\rangle>0\text{for all}\alpha\in\Sigma(P_{0})\big\}.
$$
Let $S_{Q,\tau}$ be the set of $\lambda\in\fa_{\fq}^{*+}+i\fa_{\fq}^{*}$ such that $E_{\tau}(Q:-\dotvar)$ is singular at $\lambda$. By \cite[Lemma 5.4]{vdBanKuit_HC-TransformAndCuspForms} this set is finite and contained in $\fa_{\fq}^{*+}$.
It follows from \cite[Theorem 8.10 (b)]{vdBanKuit_HC-TransformAndCuspForms} that all poles of $E_{\tau}(Q:-\dotvar)$ are simple.

Let $\xi$ be the unique vector in $\fa_{\fq}^{*+}$ of unit length with respect to the Killing form.
For a meromorphic function $f: \fa_{\fq\C}^{*} \to \C$ and a point $\mu \in \fa_{\fq\C}^{*}$ we define the
residue
$$
\Res_{\gl = \mu}\; \gf(\gl) :=  \Res_{z = 0}\;  \gf(\mu + z \xi).
$$
Here, $z$ is a variable in the complex plane, and the residue
on the right-hand side is the usual residue from complex analysis, i.e., the coefficient
of $z^{-1}$ in the Laurent expansion of $z \mapsto \gf(\mu + z\xi)$ around $z = 0.$
For $\mu\in S_{Q,\tau}$ we define $\Restau(Q:\mu)=\Restau(Q:\mu:\dotvar)$ to be the function $G/H \to \Hom(\oC(\tau), \Vtau)$ given by
$$
\Restau(Q:\mu:x)(\psi)
=-\Res_{\gl=-\mu}E(Q:\psi:\gl)(x).
$$
By \cite[Theorem 8.10 (a)]{vdBanKuit_HC-TransformAndCuspForms}
\begin{equation}\label{eq Restau(Q:mu)in C_ds}
\Restau(Q:\mu)(\psi)\in\cC_{\ds}(G/H:\tau)
\qquad\big(\mu\in S_{Q,\tau},\psi\in\oC(\tau)\big).
\end{equation}

Following \cite[Section 4.1]{vdBanKuit_HC-TransformAndCuspForms}
we define for $\phi\in C_{c}^{\infty}(G/H:\tau)$ the smooth function $\It_{Q,\tau}\phi:A_{\fq}\to\oC(\tau)$ that is determined by the equation
$$
\big\langle\It_{Q,\tau}\phi(a),\psi\big\rangle
=\lim_{\epsilon\downarrow0}   \int_{\epsilon\nu+i\fa_{\fq}^{*}}
    \int_{G/H}\langle\phi(x),E_{\tau}(Q:\psi:-\bar\lambda)(x)a^{\lambda}\,dx\,d\lambda.
$$
for every $\psi\in\oC(\tau)$ and $a\in A_{\fq}$.
Here $\nu$ is any choice of element of $\fa_{\fq}^{*+}$; the definition is independent
of this choice. The map $\It_{Q,\tau}: C_{c}^{\infty}(G/H:\tau)\to C^{\infty}(A_{\fq})\otimes\oC(\tau)$ extends to a continuous map
$$
\It_{Q,\tau}: \cC(G/H:\tau)\to C^{\infty}(A_{\fq})\otimes\oC(\tau).
$$
See \cite[Proposition 7.2]{vdBanKuit_EisensteinIntegrals}.
The image of $\It_{Q,\tau}$ is contained in the tempered $\oC(\tau)$-valued functions on $A_{\fq}$ and is called the tempered term of the Harish-Chandra transform. This map has the following properties.

\begin{Prop}[{\cite[Corollaries 8.2 \& 8.11]{vdBanKuit_HC-TransformAndCuspForms}}]\label{Prop H phi=exp-pol terms +I phi}\
\begin{enumerate}[(i)]
\item Let $\phi\in\cC(G/H:\tau)$. Then for every $\psi\in\oC(\tau)$ and $a\in A_{\fq}$
\begin{equation}\label{eq langle H phi,psi rangle= sum residues + langle I phi, psi rangle}
\langle\Ht_{Q,\tau}\phi(a)-\It_{Q,\tau}\phi(a),\psi\rangle
=\sum_{\mu\in S_{Q,\tau}}a^{\mu}\int_{G/H}\big\langle\phi(x),\Restau(Q:\mu:x)(\psi)\big\rangle\,dx.
\end{equation}
\item
$\cC_{\ds}(G/H:\tau)=\ker(\It_{Q,\tau})$.
\end{enumerate}
\end{Prop}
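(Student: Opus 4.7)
The plan is to derive both statements from the interplay between the contour-integral definition of $\It_{Q,\tau}$ and the absolutely convergent unipotent integral defining $\Ht_{Q,\tau}$, linked by a contour-shift argument in $\fa_{\fq\C}^{*}$. The residual simple poles of $\gl\mapsto E_{\tau}(Q:\dotvar:-\gl)$ in the strip $\fa_{\fq}^{*+}+i\fa_{\fq}^{*}$ are exactly the points $\mu\in S_{Q,\tau}$, and the residue theorem produces the sum on the right-hand side of (\ref{eq langle H phi,psi rangle= sum residues + langle I phi, psi rangle}).

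For part (i), I would first, for $\Re\gl$ sufficiently negative with respect to $\Sigma(Q)\cap\sigma\theta\Sigma(Q)$, rewrite the inner pairing
$$
\int_{G/H}\big\langle\phi(x),E_{\tau}(Q:\psi:-\bar\gl)(x)\big\rangle\,dx
$$
by substituting the defining $H/H\cap v^{-1}Qv$ integral for the Eisenstein integral and unfolding against the $G/H$ integral. After the change of variables $x\mapsto xv^{-1}$ on the $v$-summand and invoking the Langlands decomposition $Q=MAN_{Q}$, the resulting expression is identified with an $A_{\fq}$-Mellin transform of the unipotent integral that enters the definition of $\Ht_{Q,\tau}\phi$. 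The defining contour integral for $\It_{Q,\tau}\phi(a)$ is then the inverse Mellin transform of this same kernel along $\epsilon\nu+i\fa_{\fq}^{*}$; shifting to a line $\nu_{0}+i\fa_{\fq}^{*}$ with $\nu_{0}\in\fa_{\fq}^{*+}$ large enough to lie to the right of all points of $S_{Q,\tau}$ recovers $\Ht_{Q,\tau}\phi(a)$ itself. Returning the contour to $\epsilon\nu+i\fa_{\fq}^{*}$ with $\epsilon\downarrow 0$ crosses the simple poles at $\gl=-\mu$ for $\mu\in S_{Q,\tau}$, and by the very definition of $\Restau(Q:\mu:x)$ the residue contributions assemble into the sum claimed in (\ref{eq langle H phi,psi rangle= sum residues + langle I phi, psi rangle}).

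For part (ii), the inclusion $\cC_{\ds}(G/H:\tau)\subseteq\ker(\It_{Q,\tau})$ follows from Plancherel-type orthogonality: for $\gl\in i\fa_{\fq}^{*}$, the Eisenstein integral $E_{\tau}(Q:\psi:-\bar\gl)$ is a tempered but not square-integrable wave packet and is therefore orthogonal to the discrete-series subspace, so the defining contour integral for $\It_{Q,\tau}\phi$ vanishes. For the reverse inclusion, suppose $\It_{Q,\tau}\phi=0$; then by (i), $\Ht_{Q,\tau}\phi(a)$ is a finite exponential sum whose coefficients pair $\phi$ against the functions $\Restau(Q:\mu:\dotvar)(\psi)$, which by (\ref{eq Restau(Q:mu)in C_ds}) lie in $\cC_{\ds}(G/H:\tau)$. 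Combining this with the Plancherel decomposition of $\cC(G/H:\tau)$, which in split rank $1$ places the entire continuous tempered spectrum in the image of Eisenstein integrals associated with parabolics in $\cP_{\fh}(A)$, one concludes that the continuous part of $\phi$ vanishes and hence $\phi\in\cC_{\ds}(G/H:\tau)$.

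The main technical obstacle is the contour shift in part (i): one must establish uniform polynomial bounds in $\gl$ on vertical strips for the pairing $\big\langle\phi(x),E_{\tau}(Q:\psi:-\bar\gl)(x)\big\rangle$ integrated over $G/H$, in order both to justify the Fubini interchange and to control the arcs at infinity when shifting contours. This requires combining the Schwartz decay of $\phi$ with sharp asymptotic estimates on the Eisenstein integral both in the spatial variable $x$ and in the spectral variable $\gl$; the finiteness of $S_{Q,\tau}$ and simplicity of its poles are comparatively soft inputs that then make the residue extraction routine.
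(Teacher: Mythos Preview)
The paper does not prove this proposition at all: it is quoted, with attribution, from \cite[Corollaries 8.2 \& 8.11]{vdBanKuit_HC-TransformAndCuspForms}, and no argument is supplied in the present article. There is therefore no in-paper proof to compare your proposal against.

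That said, your outline for part (i) is the correct mechanism and matches how the result is actually obtained in the cited reference: one identifies the inner $G/H$-integral in the definition of $\It_{Q,\tau}$ with a Mellin transform in $\gl$ of $\Ht_{Q,\tau}\phi$, and the contour shift from $\epsilon\nu+i\fa_{\fq}^{*}$ to a line far to the right of $S_{Q,\tau}$ produces exactly the residue sum, with the analytic control you flag (uniform estimates on vertical strips) being the substantive work.

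Your argument for the reverse inclusion in part (ii), however, has a gap. From $\It_{Q,\tau}\phi=0$ together with (i) you learn only that $\Ht_{Q,\tau}\phi$ is a finite exponential sum; this says nothing directly about $\phi$ itself. Your appeal to ``the Plancherel decomposition places the continuous tempered spectrum in Eisenstein images'' is essentially the assertion that $\It_{Q,\tau}$ is injective on the most-continuous part of $\cC(G/H:\tau)$, which is precisely the nontrivial half of the statement you are trying to prove. In the cited reference this inclusion is not deduced from (i); rather, $\It_{Q,\tau}$ is identified (up to a Euclidean Fourier transform on $A_{\fq}$) with the spherical Fourier transform associated to the most-continuous series, and its kernel is then \emph{by construction} the orthocomplement of the most-continuous subspace, which in split rank one is $\cC_{\ds}(G/H:\tau)$. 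So the reverse inclusion rests on the Fourier-analytic description of $\It_{Q,\tau}$, not on formula (i).
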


\section{Proof of Theorem \ref{Main theorem}}\label{Section proof}
From (\ref{eq criterion ds is cusp}) it follows that {\em (ii)} implies {\em (i)} in Theorem \ref{Main theorem}. Moreover, if (\ref{eq C_ds(G/H)^K cap C_cusp(G/H)=0}) holds, then {\em (i)} implies {\em (ii)}.
It remains to prove (\ref{eq C_ds(G/H)^K cap C_cusp(G/H)=0}).

Let $Q\in\cP_{\fh}(A)$. Let further $\1_{K}$ be the trivial representation of $K$ and let $\phi\in\cC_{\ds}(G/H:\1_{K})=\cC_{\ds}(G/H)^{K}$. Then $\It_{Q,\1_{K}}\phi=0$. Hence $\Ht_{Q,\1_{K}}\phi=0$ if and only if the right-hand side of (\ref{eq langle H phi,psi rangle= sum residues + langle I phi, psi rangle}) vanishes for all $a\in A_{\fq}$ and all $\psi\in\oC(\1_{K})$. The latter is true if and only if
$$
\int_{G/H}\big\langle\phi(x),\Restriv(Q:\mu:x)(\psi)\big\rangle\,dx
=0\qquad\big(\mu\in S_{Q,\1_{K}}, \psi\in\oC(\1_{K})\big),
$$
i.e., $\Ht_{Q,\1_{K}}\phi=0$ if and only if $\phi$ is perpendicular to
$$
V_{Q}
:=\spn\big\{\Restriv(Q:\mu)(\psi):\mu\in S_{Q,\1_{K}},\psi\in\oC(\1_{K})\big\}.
$$
To show (\ref{eq C_ds(G/H)^K cap C_cusp(G/H)=0}) it thus suffices to prove the following proposition.

\begin{Prop}\label{Prop V_Q=C_ds(G/H)^K}
$V_{Q}=\cC_{\ds}(G/H)^{K}$.
\end{Prop}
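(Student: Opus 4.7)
The inclusion $V_{Q}\subseteq\cC_{\ds}(G/H)^{K}$ is immediate from (\ref{eq Restau(Q:mu)in C_ds}), so what needs proof is the reverse inclusion. I would first reduce it to an injectivity statement. For $\phi\in\cC_{\ds}(G/H)^{K}$, Proposition \ref{Prop H phi=exp-pol terms +I phi}(ii) gives $\It_{Q,\1_{K}}\phi=0$, so (\ref{eq langle H phi,psi rangle= sum residues + langle I phi, psi rangle}) reduces to
$$
\langle\Ht_{Q,\1_{K}}\phi(a),\psi\rangle
=\sum_{\mu\in S_{Q,\1_{K}}}a^{\mu}\int_{G/H}\langle\phi(x),\Restriv(Q:\mu:x)(\psi)\rangle\,dx.
$$
Since $S_{Q,\1_{K}}$ is finite, linear independence of the characters $a\mapsto a^{\mu}$ shows that $\Ht_{Q,\1_{K}}\phi=0$ if and only if $\phi$ is orthogonal to $V_{Q}$. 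Proposition \ref{Prop V_Q=C_ds(G/H)^K} is therefore equivalent to the assertion that $\Ht_{Q,\1_{K}}$ is injective on $\cC_{\ds}(G/H)^{K}$.

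To establish this injectivity I would exploit the finite-dimensionality of $\cC_{\ds}(G/H)^{K}$ and decompose it as an orthogonal direct sum $\bigoplus_{\pi}V_{\pi}$ indexed by the finitely many $K$-spherical discrete series representations $\pi$ of $G/H$, where $V_{\pi}\subseteq\cC_{\ds}(G/H)^{K}$ is spanned by the $K$-fixed matrix coefficients in the $\pi$-isotypic part of $L^{2}(G/H)$. Each such $\pi$ carries a distinguished spectral parameter $\mu_{\pi}\in\fa_{\fq}^{*+}$, and distinct representations yield distinct parameters. The exponential polynomial $\Ht_{Q,\1_{K}}\phi$ for $\phi\in V_{\pi}$ is then controlled by the single exponent $\mu_{\pi}$, so distinctness of the $\mu_{\pi}$ together with piecewise injectivity of $\Ht_{Q,\1_{K}}$ on each $V_{\pi}$ would give global injectivity on $\cC_{\ds}(G/H)^{K}$.

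The heart of the argument, and the main obstacle, is the piecewise injectivity: for each spherical $\pi$ and each nonzero $\phi\in V_{\pi}$ one must produce $\psi\in\oC(\1_{K})$ with $\langle\phi,\Restriv(Q:\mu_{\pi}:\dotvar)(\psi)\rangle\neq0$. I would approach this by identifying the residue operator $\Restriv(Q:\mu_{\pi})$ with (a nonzero multiple of) the leading $Q$-asymptotic coefficient map on $V_{\pi}$, using the asymptotic analysis of the $K$-spherical Eisenstein integral $E_{\1_{K}}(Q:\dotvar:\lambda)$ near its simple pole at $\lambda=-\mu_{\pi}$ developed in \cite{vdBanKuit_EisensteinIntegrals,vdBanKuit_HC-TransformAndCuspForms}. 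The remaining key input is the nondegeneracy of this leading coefficient map on $K$-spherical discrete series, valid for every $\fh$-compatible $Q$; verifying it is the technical core of the proof and is precisely the purpose for which the $\tau$-spherical machinery of Sections \ref{Section tau-spherical cusp forms} and \ref{Section Formula for H_(Q,tau)} was set up.
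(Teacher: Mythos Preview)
Your reduction of Proposition \ref{Prop V_Q=C_ds(G/H)^K} to the injectivity of $\Ht_{Q,\1_{K}}$ on $\cC_{\ds}(G/H)^{K}$ is correct and is exactly the observation the paper makes just before stating the proposition. However, from that point on your argument has a genuine gap: the ``nondegeneracy of the leading asymptotic coefficient map'' that you need for piecewise injectivity is precisely the statement you are trying to prove. Saying that for every nonzero $\phi\in V_{\pi}$ there exists $\psi$ with $\langle\phi,\Restriv(Q:\mu_{\pi})(\psi)\rangle\neq 0$ is equivalent to $V_{\pi}\subseteq\spn\{\Restriv(Q:\mu_{\pi})(\psi):\psi\in\oC(\1_{K})\}$, which is the $\mu$-by-$\mu$ version of Proposition \ref{Prop V_Q=C_ds(G/H)^K} itself. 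Your sketch of how to obtain it (``identify the residue with the leading $Q$-asymptotic coefficient and invoke the machinery of Sections \ref{Section tau-spherical cusp forms}--\ref{Section Formula for H_(Q,tau)}'') does not get off the ground: those sections furnish only formula (\ref{eq langle H phi,psi rangle= sum residues + langle I phi, psi rangle}), which you have already used, and nothing there establishes that the residue map hits all of $V_{\pi}$. A secondary issue is that your assertion ``distinct representations yield distinct parameters $\mu_{\pi}$'' is unsupported; the paper does not assume this, and in fact works with the full set $D_{\mu}$ of discrete series sharing a given Laplace eigenvalue.

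The paper's proof avoids this circularity altogether by bringing in an independent ingredient: the explicit residue formula (\ref{eq T_ds}) for the orthogonal projection $T_{\ds}:C_{c}^{\infty}(G/H)^{K}\to\cC_{\ds}(G/H)^{K}$, coming from the Plancherel theory of \cite{vdBanSchlichtkrull_FourierInversionOnAReductiveSymmetricSpace,vdBan&Schlichtkrull_ThePlancherelDecompositionForAReductiveSymmetricSpaceI}. That formula expresses $T_{\ds}\phi$ as a linear combination of residues of $\Phi_{w}(\lambda:a)E^{\circ}(\bar P_{0}:\delta_{w}:-\lambda)$. Lemma \ref{Lemma V_Q compared to kernel} then compares the $\bar P_{0}$-Eisenstein integrals with the $Q$-Eisenstein integrals via a partial $c$-function that is holomorphic and positive on the relevant region, so that each such residue is a scalar multiple of $\Restriv(Q:\mu)(\delta_{w})$. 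Since the image of $T_{\ds}$ is all of $\cC_{\ds}(G/H)^{K}$, this immediately gives $\cC_{\ds}(G/H)^{K}\subseteq V_{Q}$. In short, the missing idea in your proposal is to use the Plancherel projection formula rather than attempting to prove injectivity of $\Ht_{Q,\1_{K}}$ directly.
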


To prove the proposition we will study the orthogonal projection (with respect to the inner product on $L^{2}(G/H:\1_{K})$)
$$
T_{\ds}:C_{c}^{\infty}(G/H:\1_{K})\to\cC_{\ds}(G/H:\1_{K}).
$$
To this end we first recall a formula for $T_{\ds}$.

Let the minimal $\sigma\theta$-stable parabolic $P_{0}$ be as before (see Section \ref{Section Formula for H_(Q,tau)}).
For $\lambda\in \fa_{\fq\C}^{*}$ and $\psi\in\oC(\1_{K})$ we define the Eisenstein integral $E_{\1_{K}}(\bar P_{0}:\psi:\lambda)=E(P_{0}:\psi:\lambda)$ like $E_{\tau}(Q:\psi:\lambda)$ in the previous section, but with $\tau$ and $Q$ replaced by $\1_{K}$ and $\bar P_{0}=\theta P_{0}$ respectively. Note that in order to replace $Q$ by $P_{0}$ in this construction, we need to replace the space $\oC(\tau)$ by
$$
\cA_{M_{0},2}(\tau)
:=\bigoplus_{v \in \cW}C^\infty(M_{0}/M_{0}\cap vHv^{-1}:\tau_{M_{0}}),
$$
where $\tau_{M_{0}}$ is the restriction of $\tau$ to $M_{0}\cap K$.
However, in view of \cite[Lemma 8.1]{vdBanKuit_HC-TransformAndCuspForms} applied with $vHv^{-1}$ in place of $H$, for $v\in\cW$, we have
$$
\oC(\tau)
\simeq\cA_{M_{0},2}(\tau).
$$

We normalize these Eisenstein integrals as in \cite[Section 5]{vdBanSchlichtkrull_FourierTransformOnASemisimpleSymmetricSpace} and thus we obtain the normalized Eisenstein integral
$$
E^{\circ}(\bar P_{0}:\psi:\lambda)\in C^{\infty}(G/H:\1_{K})
$$
for $\psi\in\oC(\1_{K})$ and generic $\lambda\in\fa_{\fq\C}^{*}$

We define
$$
A_{\fq}^{-}
:=\{a\in A:a^{\alpha}<1\text{for all}\alpha\in\Sigma(P_{0})\}.
$$
For $w\in\cW$ let $\delta_{w}\in \oC(\1_{K})$ be the element satisfying
$$
\langle\psi,\delta_{w}\rangle
=\psi_{w}(e)
\qquad\big(\psi\in\oC(\1_{K})\big).
$$
Observe that $\oC(\1_{K})$ is spanned by $\{\delta_{w}:w\in\cW\}$.
For $w\in\cW$ and generic $\lambda\in\fa_{\fq\C}^{*}$ we write
$$
\Phi_{w}(\lambda:\dotvar)=\Phi_{\bar P_{0},w}(\lambda:\dotvar):A_{\fq}^{-}\to\End(\C)=\C
$$
for the function introduced in \cite[Section 10]{vdBanSchlichtkrull_ExpansionsForEisensteinIntegralsOnSemisimpleSymmetricSpaces}.
From equation (53) and Remark 6.2 in \cite{vdBanSchlichtkrull_ExpansionsForEisensteinIntegralsOnSemisimpleSymmetricSpaces} it follows that $\Phi_{w}(\lambda,a)$ depends holomorphically on $\lambda$ for $\lambda\in\fa_{\fq}^{*+}+i\fa_{\fq}^{*}$. Moreover, it can be seen from (15) and Proposition 5.2 in the same article that $\Phi_{w}(\lambda:a)$ is real for $\lambda\in\fa_{\fq}^{*+}$.

Let $\Delta=\{-\alpha\}$ be the set of simple roots in $\Sigma(\bar P_{0})$. From \cite[Theorem 21.2(c)]{vdBan&Schlichtkrull_ThePlancherelDecompositionForAReductiveSymmetricSpaceI} (see also Definition 12.1) it follows that $T_{\ds}$ coincides with the operator $T_{\Delta}$ defined in equation (5.5) in \cite{vdBanSchlichtkrull_FourierInversionOnAReductiveSymmetricSpace}.
In our setting it is straight forward to rewrite this equation and thus obtain the following formula for $T_{\ds}$. For $\phi\in C_{c}^{\infty}(G/H:\1_{K})$, $w\in\cW$ and $a\in A_{\fq}^{-}$
\begin{equation}\label{eq T_ds}
T_{\ds}\phi(w^{-1}aw)
=\int_{G/H}\phi(x)\sum_{\mu\in S}
    \overline{\Res_{\lambda=\mu}\Big(\Phi_{w}(\lambda:a)E^{\circ}(\bar P_{0}:\delta_{w}:-\lambda)(x)\Big)}\,dx.
\end{equation}
Note that $T_{\ds}\phi$ is completely determined by this formula as $KA_{\fq}^{-}\cW H$ is a dense open subset of $G$.

We now compare the residues occurring in (\ref{eq T_ds}) to the residues $\Restriv(Q:\mu)$. This is done in the following lemma.

\begin{Lemma}\label{Lemma V_Q compared to kernel}
The set $S:=S_{Q,\1_{K}}$ is equal to the set of $\lambda\in\fa_{\fq}^{*+}+i\fa_{\fq}^{*}$ such that
$$
\lambda\mapsto \Phi_{w}(\lambda:a)E^{\circ}(\bar P_{0}:\delta_{w}:-\lambda)
$$
is singular at $\lambda$ for some $w\in \cW$ and $a\in A_{\fq}^{-}$. The poles which occur are simple.
Moreover, for every $\mu\in S$ there exists a constant $c_{\mu}>0$ so that for every $w\in\cW$ and $a\in A_{\fq}^{-}$
\begin{equation}\label{eq Res(Q)=C Res(bar P0)}
\Res_{\lambda=\mu}\Big(\Phi_{w}(\lambda:a)E^{\circ}(\bar P_{0}:\delta_{w}:-\lambda)\Big)
=c_{\mu}\Phi_{w}(\mu:a)\Restriv(Q:\mu)(\delta_{w}).
\end{equation}
\end{Lemma}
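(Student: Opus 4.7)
\textbf{Proof proposal for Lemma~\ref{Lemma V_Q compared to kernel}.}
The plan is to reduce the statement to a pointwise comparison between the normalized Eisenstein integral $E^\circ(\bar P_0:\delta_w:\lambda)$ and the Eisenstein integral $E_{\1_K}(Q:\delta_w:\lambda)$, and then to identify the proportionality constant at each $\mu\in S$. Since $\Phi_w(\lambda:a)$ is holomorphic on $\fa_\fq^{*+}+i\fa_\fq^*$, the singular set of $\lambda\mapsto\Phi_w(\lambda:a)\,E^\circ(\bar P_0:\delta_w:-\lambda)$ in that region coincides with the singular set of $\lambda\mapsto E^\circ(\bar P_0:\delta_w:-\lambda)$, and
$$
\Res_{\lambda=\mu}\bigl(\Phi_w(\lambda:a)\,E^\circ(\bar P_0:\delta_w:-\lambda)\bigr)=\Phi_w(\mu:a)\,\Res_{\lambda=\mu}E^\circ(\bar P_0:\delta_w:-\lambda).
$$

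Next I would exploit the normalization procedure of \cite[Section 5]{vdBanSchlichtkrull_FourierTransformOnASemisimpleSymmetricSpace}, combined with the relation between Eisenstein integrals attached to different parabolics via standard intertwining operators (\cite[Section 8]{vdBanKuit_EisensteinIntegrals}), to produce a meromorphic scalar $\eta_w$ such that
$$
E^\circ(\bar P_0:\delta_w:\lambda)=\eta_w(\lambda)\,E_{\1_K}(Q:\delta_w:\lambda).
$$
The condition $\Sigma(Q)\cap\sigma\theta\Sigma(Q)\subseteq\Sigma(P_0)$, which in split rank one sharply restricts the relative $\sigma\theta$-position of $\bar P_0$ and $Q$, ensures that only those $c$-function factors that are holomorphic and non-vanishing on $\fa_\fq^{*+}+i\fa_\fq^*$ enter $\eta_w$. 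Hence $\eta_w$ is holomorphic and non-vanishing on that region, so the singular set of $E^\circ(\bar P_0:\delta_w:-\lambda)$ there coincides with that of $E_{\1_K}(Q:\delta_w:-\lambda)$, giving $S=S_{Q,\1_K}$, and simplicity follows from \cite[Theorem 8.10(b)]{vdBanKuit_HC-TransformAndCuspForms}. A direct residue computation then yields
$$
\Res_{\lambda=\mu}E^\circ(\bar P_0:\delta_w:-\lambda)=\eta_w(-\mu)\,\Res_{\lambda=\mu}E_{\1_K}(Q:\delta_w:-\lambda)=\eta_w(-\mu)\,\Restriv(Q:\mu)(\delta_w),
$$
which is (\ref{eq Res(Q)=C Res(bar P0)}) with the candidate $c_\mu := \eta_w(-\mu)$, pending the proof that this value is independent of $w$ and positive.

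The main obstacle is exactly this last step. For $w$-independence I would argue via the $W(\fa_\fq)$-equivariance of the family $\{E^\circ(\bar P_0:\delta_w:\lambda)\}_{w\in\cW}$, i.e.\ the functional equations for normalized Eisenstein integrals from \cite[Section 21]{vdBan&Schlichtkrull_ThePlancherelDecompositionForAReductiveSymmetricSpaceI}, matched against the analogous equivariance of the family $\{\Restriv(Q:\mu)(\delta_w)\}_{w\in\cW}$; both families are built from residues of the same Eisenstein integrals at the same point $\mu\in S$, and both take values in $\cC_\ds(G/H:\1_K)$, so the proportionality constants $\eta_w(-\mu)$ must match under the $\cW$-action, forcing them to be equal. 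Positivity then follows because the surviving $c$-function factors in $\eta_w$ are Harish-Chandra Gamma-factor products, real and positive on $\fa_\fq^{*+}$ and extended by the same functional symmetries to the relevant evaluation point; this is compatible with the reality of $\Phi_w(\mu:a)$ on $\fa_\fq^{*+}$ noted earlier. The hard part is the bookkeeping of normalizations across \cite{vdBanSchlichtkrull_FourierTransformOnASemisimpleSymmetricSpace} and \cite{vdBanKuit_EisensteinIntegrals}; everything else is routine complex-analytic manipulation.
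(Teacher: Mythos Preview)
Your overall strategy---relate $E^\circ(\bar P_0:\delta_w:\lambda)$ and $E_{\1_K}(Q:\delta_w:\lambda)$ by a meromorphic scalar, then use the holomorphy of $\Phi_w$ on $\fa_\fq^{*+}+i\fa_\fq^*$ to factor the residue---is exactly the paper's approach. The difference is in how the scalar is identified and how its $w$-independence and positivity are obtained, and here your proposal has a genuine gap.

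The paper invokes \cite[Proposition 4.4]{vdBanKuit_HC-TransformAndCuspForms} to write
\[
E_{\1_K}(Q:\psi:-\lambda)=c(\lambda+\rho_{Q,\fh})\,E^\circ(\bar P_0:\psi:-\lambda)
\]
for \emph{all} $\psi\in\oC(\1_K)$ simultaneously, where $c=c(\sigma(P),Q:\,\cdot\,)$ is the partial $c$-function attached to a standard intertwining operator acting on the one-dimensional space $C^\infty(Q:\1_M:\lambda)^K$. The scalar is independent of $\psi$ from the outset: the comparison of the two Eisenstein integrals is a comparison of linear maps on $\oC(\1_K)$, and for $\tau=\1_K$ the intertwiner reduces to a scalar because the $K$-fixed subspace of the principal series is one-dimensional. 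This makes your proposed $w$-independence argument via functional equations unnecessary---and as sketched it is not convincing, since the functional equations of \cite{vdBan&Schlichtkrull_ThePlancherelDecompositionForAReductiveSymmetricSpaceI} relate $E^\circ$ at $\lambda$ and $s\lambda$, not the components $\delta_w$ at a fixed $\lambda$; there is no evident $\cW$-equivariance of the residue maps that would force the $\eta_w(-\mu)$ to coincide.

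Positivity in the paper is read off directly from the convergent integral representation of $c$: the $\fh$-compatibility of $Q$ gives $\langle\lambda+\rho_{Q,\fh},\alpha\rangle>0$ for $\alpha\in\Sigma(P_0)\cap\Sigma(Q)$ whenever $\lambda\in\fa_\fq^{*+}$, so the integrand in (\ref{eq c-function}) is strictly positive and $c(\mu+\rho_{Q,\fh})>0$, whence $c_\mu=c(\mu+\rho_{Q,\fh})^{-1}>0$. This avoids your appeal to Gamma-factor positivity, which would require tracking signs through a Gindikin--Karpelevich product.
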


\begin{proof}
Let $P\in\cP(A)$ be the unique minimal parabolic subgroup contained in $P_{0}$ with $\Sigma(P)\cap\fa_{\fh}^{*}=\Sigma(Q)\cap\fa_{\fh}^{*}$. For generic $\lambda\in\fa_{\C}^{*}$ the standard intertwining operator $A\big(\sigma(P):Q:\1_{M}:\lambda\big)$ maps $C^{\infty}(Q:\1_{M}:\lambda)^{K}$ to $C^{\infty}(\sigma(P):\1_{M}:\lambda)^{K}$. Both of these spaces are $1$-dimensional. Let $\1_{Q,\lambda}\in C^{\infty}(Q:\1_{M}:\lambda)^{K}$ and $\1_{\sigma(P),\lambda}\in C^{\infty}(\sigma(P):\1_{M}:\lambda)^{K}$ be determined by
$$
\1_{Q,\lambda}(e)=\1_{\sigma(P),\lambda}(e)=1.
$$
Then the action of $A\big(\sigma(P):Q:\1_{M}:\lambda\big)$ on $C^{\infty}(Q:\1_{M}:\lambda)^{K}$ is determined by the identity
$$
A\big(\sigma(P):Q:\1_{M}:\lambda\big)\1_{Q,\lambda}
=c\big(\sigma(P),Q:\lambda\big)\1_{Q,\lambda}.
$$
Here $c:=c\big(\sigma(P),Q:\cdot\big)$ is the partial $c$-function which for $\lambda$ in the set
$$
\{\lambda\in\fa_{\C}^{*}:\Re\langle\lambda,\alpha\rangle>0\textnormal{ for all }\alpha\in\Sigma(P_{0})\cap\Sigma(Q)\}
$$
is given by the integral
\begin{equation}\label{eq c-function}
c(\lambda)
=\int_{\theta N_{P_{0}} \cap \theta N_{Q}} \1_{Q,\lambda}(\overline{n})\,d\overline{n},
\end{equation}
and for other $\lambda\in\fa_{\C}^{*}$ by meromorphic continuation. It follows from \cite[Proposition 4.4]{vdBanKuit_HC-TransformAndCuspForms} that for generic $\lambda\in\fa_{\fq\C}^{*}$
$$
E_{\1_{K}}(Q:\psi:-\lambda)
=c(\lambda+\rho_{Q,\fh})E^{\circ}\big(\bar P_{0}:\psi:-\lambda\big)
\qquad\big(\psi\in\oC(\1_{K})\big).
$$

By assumption $Q\in\cP_{\fh}(A)$, hence $\langle\rho_{Q,\fh},\alpha\rangle\geq0$ for all $\alpha\in\Sigma(Q)$. Therefore $\langle \lambda+\rho_{Q,\fh},\alpha\rangle>0$ for all $\alpha\in \Sigma(P_{0})\cap\Sigma(Q)$ if $\lambda\in\fa_{\fq}^{*+}+i\fa_{\fq}^{*}$, and thus $\lambda\mapsto c(\lambda+\rho_{Q,\fh})$ is holomorphic on $\fa_{\fq}^{*+}+i\fa_{\fq}^{*}$ and given by the integral representation (\ref{eq c-function}). Note that for $\lambda\in\fa_{\fq}^{*+}(P)$ the integrand is strictly positive, hence $c(\lambda+\rho_{Q,\fh})>0$.

Let $\mu\in S$. Since the pole of $E_{\1_{K}}(Q:\psi:-\lambda)$ at $\lambda=-\mu$ is simple and the function
$$
\lambda\mapsto\frac{\Phi_{w}(\lambda:a)}{c(\lambda+\rho_{Q,\fh})}
$$
is holomorphic on $\fa_{\fq}^{*+}+i\fa_{\fq}^{*}$, it follows that
$$
\Res_{\lambda=\mu}\Big(\Phi_{w}(\lambda:a)E^{\circ}(\bar P_{0}:\delta_{w}:-\lambda)\Big)
=\frac{\Phi_{w}(\mu:a)}{c(\mu+\rho_{Q,\fh})}\Restriv(Q:\mu)(\delta_{w}),
$$
hence (\ref{eq Res(Q)=C Res(bar P0)}) follows with $c_{\mu}=\frac{1}{c(\mu+\rho_{Q,\fh})}$.
\end{proof}

\begin{proof}[Proof of Proposition \ref{Prop V_Q=C_ds(G/H)^K}]
Since $T_{\ds}$ is the restriction to $C_{c}^{\infty}(G/H)^{K}$ of the orthogonal projection $\cC(G/H)^{K}\to\cC_{\ds}(G/H)^{K}$ (with respect to the $L^{2}$-inner product), it follows from the formula (\ref{eq T_ds}) for $T_{\ds}$ and Lemma \ref{Lemma V_Q compared to kernel} that
\begin{align*}
\cC_{\ds}(G/H)^{K}
&=\spn\big\{\sum_{\mu\in S}\Res_{\lambda=\mu}\Big(\Phi_{w}(\lambda:a)E^{\circ}(\bar P_{0}:\delta_{w}:-\lambda)\Big)
    :a\in A_{\fq}^{-},w\in\cW\big\}\\
&=\spn\big\{\sum_{\mu\in S}c_{\mu}\Phi_{w}(\mu:a)\Restriv(Q:\mu)(\delta_{w})
    :a\in A_{\fq}^{-},w\in\cW\big\}\\
&\subseteq V_{Q}.
\end{align*}
The other inclusion $V_{Q}\subseteq \cC_{\ds}(G/H)^{K}$ is a consequence of (\ref{eq Restau(Q:mu)in C_ds}).
\end{proof}

\section{Multiplicity of $K$-spherical discrete series representations}\label{Section Multiplicity}

In this final section we use the analysis that has been used for the proof of Theorem \ref{Main theorem} to prove Theorem \ref{Thm multiplicity 1}.

We begin with a lemma. If $\pi$ is a discrete series representation for $G/H$, then we write $\cC_{\pi}(G/H)$ for the closure of the span of the $K$-finite generalized matrix coefficients of $\pi$ in $\cC(G/H)$. Note that the closure of $\cC_{\pi}(G/H)$ in $L^{2}(G/H)$ decomposes into a direct sum of representation equivalent to $\pi$.

\begin{Lemma}
For every $K$-spherical discrete series representation $\pi$ of $G/H$ there exists a unique $\mu\in S_{Q,\1_{K}}$ so that
\begin{equation}\label{eq C_pi subseteq span Restriv}
\cC_{\pi}(G/H)^{K}
\subseteq\spn\big\{\Restriv(Q:\mu)(\psi):\psi\in\oC(\1_{K})\big\}.
\end{equation}
Moreover, if $\mu,\nu\in S$ and $\mu\neq \nu$, then for every $\psi,\chi\in\oC(\1_{K})$
\begin{equation}\label{eq Restrivs orthogonal}
\int_{G/H}\Restriv(Q:\mu:x)(\psi)\overline{\Restriv(Q:\nu:x)(\chi)}\,dx=0.
\end{equation}
\end{Lemma}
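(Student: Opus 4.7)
The strategy is to separate the residue subspaces $V_{Q}^{\mu} := \spn\{\Restriv(Q:\mu)(\psi) : \psi\in\oC(\1_{K})\}$ for distinct $\mu\in S$ by their $\Z(\fg)$-infinitesimal characters. For generic $\lambda$, the normalized Eisenstein integral $E^{\circ}(\bar P_{0}:\psi:-\lambda)$ is a joint $\Z(\fg)$-eigenfunction on $G/H$ with eigenvalue $\chi_{-\lambda}(D)$ depending holomorphically on $\lambda$. Since any $D\in\Z(\fg)$ acts as a differential operator on $G/H$, it commutes with the residue in $\lambda$ (which is a mere Laurent-coefficient extraction). Because the pole of $\lambda\mapsto E^{\circ}(\bar P_{0}:\psi:-\lambda)$ at $\lambda=\mu\in S$ is simple, this yields
$$
D\cdot\Restriv(Q:\mu)(\psi)=\chi_{-\mu}(D)\cdot\Restriv(Q:\mu)(\psi)\qquad(\psi\in\oC(\1_{K})),
$$
so the whole subspace $V_{Q}^{\mu}$ lies in a single $\Z(\fg)$-eigenspace; denote its character by $\chi_{\mu}$.

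The crux is then to verify injectivity of $\mu\mapsto\chi_{\mu}$ on $S\subset\fa_{\fq}^{*+}$. Via Harish-Chandra's isomorphism $\Z(\fg)\cong S(\fh_{\C})^{W}$ for a $\sigma$-stable Cartan $\fh\supset\fa$, the character $\chi_{\mu}$ corresponds to the $W$-orbit of a point $-\mu+\mu_{M}\in\fh_{\C}^{*}$, where $\mu_{M}$ is fixed: it encodes the trivial $M$-part of $\oC(\1_{K})$, since for $\tau=\1_{K}$ the $M$-representation $\tau_{M}^{0}$ reduces to $\1_{M}$ and is therefore the same across all $\delta_{w}$. As $\dim\fa_{\fq}=1$, the open positive Weyl chamber $\fa_{\fq}^{*+}$ is a half-line meeting each $W(\fa_{\fq})$-orbit in at most one point, and no element of $W$ can fix $\mu_{M}$ while permuting distinct points of $\fa_{\fq}^{*+}$. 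Hence $\chi_{\mu}\neq\chi_{\nu}$ whenever $\mu,\nu\in S$ with $\mu\neq\nu$.

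Both parts of the lemma then follow quickly. By Proposition \ref{Prop V_Q=C_ds(G/H)^K}, $\cC_{\ds}(G/H)^{K}=V_{Q}=\sum_{\mu\in S}V_{Q}^{\mu}$, and by the injectivity just established this is a direct sum of distinct $\Z(\fg)$-eigenspaces. On the other hand, we have the $L^{2}$-orthogonal decomposition $\cC_{\ds}(G/H)^{K}=\bigoplus_{\pi}\cC_{\pi}(G/H)^{K}$ over the $K$-spherical discrete series $\pi$, each summand being a joint $\Z(\fg)$-eigenspace with character $\chi_{\pi}$. Therefore each $\cC_{\pi}(G/H)^{K}$ is forced into the unique $V_{Q}^{\mu_{\pi}}$ with $\chi_{\mu_{\pi}}=\chi_{\pi}$, which is (\ref{eq C_pi subseteq span Restriv}); uniqueness of $\mu_{\pi}$ is immediate. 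For (\ref{eq Restrivs orthogonal}), note that for $\mu\neq\nu$ the spaces $V_{Q}^{\mu}$ and $V_{Q}^{\nu}$ lie in sums of $\cC_{\pi}(G/H)^{K}$ over disjoint index sets (those $\pi$'s with $\chi_{\pi}=\chi_{\mu}$ versus $\chi_{\pi}=\chi_{\nu}$); the $L^{2}$-orthogonality of these summands yields the desired vanishing of the integral.

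The main obstacle will be the second step: pinning down the infinitesimal character of $E^{\circ}(\bar P_{0}:\delta_{w}:\lambda)$ precisely enough to establish the injectivity $\mu\mapsto\chi_{\mu}$ via Harish-Chandra's isomorphism. The split-rank-one assumption is essential here, collapsing the Weyl-group analysis to a one-dimensional situation, and the choice $\tau=\1_{K}$ is what guarantees that the $M$-part $\mu_{M}$ of the parameter is constant across $\oC(\1_{K})$.
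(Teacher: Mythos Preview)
Your overall strategy is sound and close in spirit to the paper's, but the execution differs in one important respect, and your injectivity step has a real gap.

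The paper does not appeal to the full center $\Z(\fg)$ or the Harish-Chandra isomorphism at all; it works only with the Laplacian. Concretely, it starts from a $\phi\in\cC_{\pi}(G/H)^{K}$, uses $\Delta_{G/H}\phi=c\phi$, and then the radial transport $\Delta_{A_{\fq}}\Ht_{Q,\1_{K}}\phi=(c+\langle\rho_{P_{0}},\rho_{P_{0}}\rangle)\Ht_{Q,\1_{K}}\phi$. Since $\Ht_{Q,\1_{K}}\phi$ is a finite linear combination of exponentials $a^{\mu}$ with $\mu\in S\subset\fa_{\fq}^{*+}$ (Proposition~\ref{Prop H phi=exp-pol terms +I phi}), and $\Delta_{A_{\fq}}a^{\mu}=\langle\mu,\mu\rangle a^{\mu}$, the one-dimensionality of $\fa_{\fq}$ forces a \emph{single} $\mu$ to survive: two elements of the positive half-line with the same norm are equal. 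This gives the orthogonality of $\phi$ to the wrong residues directly from the pairing formula~(\ref{eq langle H phi,psi rangle= sum residues + langle I phi, psi rangle}), and then Proposition~\ref{Prop V_Q=C_ds(G/H)^K} finishes as you indicate.

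Your route via infinitesimal characters is morally the same separation-by-eigenvalue, but the step you yourself flag as the obstacle is not established. The claim ``no element of $W$ can fix $\mu_{M}$ while permuting distinct points of $\fa_{\fq}^{*+}$'' concerns the Weyl group of the full Cartan $\fh$, not $W(\fa_{\fq})$, and there is no argument given for why an element $w\in W$ with $w(-\mu+\mu_{M})=-\nu+\mu_{M}$ must restrict to something in $W(\fa_{\fq})$ acting on $\fa_{\fq}^{*}$; a priori $w$ could mix the $\fa_{\fq}$- and $M$-components. The simple remedy is to drop the full $\Z(\fg)$ and use only the Casimir: then $\chi_{\mu}(\Omega)=\langle\mu,\mu\rangle+\text{(term independent of }\mu\text{)}$, and injectivity of $\mu\mapsto\chi_{\mu}(\Omega)$ on the half-line $\fa_{\fq}^{*+}$ is immediate. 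With that fix your argument goes through; it is then essentially a reorganization of the paper's proof, trading the transport identity for $\Ht_{Q,\1_{K}}$ against the observation that the residues themselves are Casimir eigenfunctions (a consequence of the simplicity of the poles).
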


\begin{proof}
Let $\pi$ be a $K$-spherical discrete series representation for $G/H$. Then $\cC_{\pi}(G/H)^{K}$ is non-zero and $\cC_{\pi}(G/H)^{K}$ is canonically identified with a subspace $\cC_{\pi}(G/H:\1_{K})$ of $\cC(G/H:\1_{K})$. Let $\phi\in\cC_{\pi}(G/H:\1_{K})$. Let $\Delta_{G/H}$ and $\Delta_{A_{\fq}}$ be the Laplacian on $G/H$ and $A_{\fq}$ respectively. Since $\phi$ is a joint-eigenfunction of $\D(G/H)$, there exists a $c\in\C$ such that
$$
\Delta_{G/H}\phi=c\phi.
$$
The constant $c$ depends only on $\pi$, not on the particular choice of $\phi$.
By \cite[Lemma 8.4]{vdBanKuit_HC-TransformAndCuspForms}, the function $\Ht_{Q,\1_{K}}\phi$ satisfies
\begin{equation}\label{eq Delta H phi= gamma H phi}
\Delta_{A_{\fq}}\Ht_{Q,\1_{K}}\phi
=(c+\langle\rho_{P_{0}},\rho_{P_{0}}\rangle) \Ht_{Q,\1_{K}}\phi.
\end{equation}

It follows from Proposition \ref{Prop H phi=exp-pol terms +I phi} that $\Ht_{Q,\1_{K}}\phi$ is a finite sum of exponential functions, all with non-zero real exponents $\mu$ in the set $S_{Q,\1_{K}}$. Together with (\ref{eq Delta H phi= gamma H phi}) this implies that there exists a unique $\mu\in S_{Q,\1_{K}}$ (only depending on $\pi$, not on the function $\phi$) with $\langle\mu,\mu\rangle=c+\langle\rho_{P_{0}},\rho_{P_{0}}\rangle$, and a $\psi_{0}\in\oC(\1_{K})$  such that
$$
\Ht_{Q,\1_{K}}\phi(a)
=a^{\mu}\psi_{0}.
$$
In view of (\ref{eq langle H phi,psi rangle= sum residues + langle I phi, psi rangle}) it follows that $\phi$ is orthogonal to $\Restriv(Q:\nu)(\psi)$ for every $\nu\in S$ with $\nu\neq\mu$ and $\psi\in\oC(\1_{K})$.
We conclude that for every $K$-spherical discrete series representation $\pi$ there exists a unique $\mu\in S$ such that $\Delta_{G/H}$
$$
\cC_{\pi}(G/H)^{K}
\subseteq\Big(\bigoplus_{\nu\in S\setminus\{\mu\}}\spn\{\Restriv(Q:\nu)(\psi):\psi\in\oC(\1_{K})\}\Big)^{\perp}.
$$

For $\mu\in S$, let $D_{\mu}$ be the set of discrete series representations $\pi$, such that $\Delta_{G/H}$ acts on $\cC_{\pi}(G/H)$ by the scalar  $\langle\mu,\mu\rangle-\langle\rho_{P_{0}},\rho_{P_{0}}\rangle$. It follows from Proposition \ref{Prop V_Q=C_ds(G/H)^K} that
$$
\bigoplus_{\pi\in D_{\mu}}\cC_{\pi}(G/H)^{K}
=\Big(\bigoplus_{\nu\in S\setminus\{\mu\}}\spn\{\Restriv(Q:\nu)(\psi):\psi\in\oC(\1_{K})\}\Big)^{\perp},
$$
hence for every $\mu\in S$
\begin{align*}
\bigoplus_{\pi\in D_{\mu}}\cC_{\pi}(G/H)^{K}
&=\Big(\bigoplus_{\nu\in S\setminus\{\mu\}}\bigoplus_{\pi\in D_{\nu}}\cC_{\pi}(G/H)\Big)^{\perp}\\
&=\bigcap_{\nu\in S\setminus\{\mu\}}\bigoplus_{\chi\in S\setminus\{\nu\}}\spn\{\Restriv(Q:\chi)(\psi):\psi\in\oC(\1_{K})\}\\
&=\spn\{\Restriv(Q:\mu)(\psi):\psi\in\oC(\1_{K})\}.
\end{align*}
This proves the assertions in the proposition.
\end{proof}

\begin{proof}[Proof of Theorem \ref{Thm multiplicity 1}]
Let $\pi$ be a $K$-spherical discrete series representation.
If $|\cW|=1$, then the right-hand side of (\ref{eq C_pi subseteq span Restriv}) is $1$ dimensional, hence $\dim\cC_{\pi}(G/H)^{K}=1$ and the multiplicity with which $\pi$ occurs in the Plancherel decomposition is equal to $1$.

Now assume that $|\cW|=2$. In view of Lemma \ref{Lemma V_Q compared to kernel} we may rewrite (\ref{eq T_ds}) as
$$
T_{\ds}\phi(w^{-1}aw)
=\sum_{\mu\in S}c_{\mu}\Phi_{w}(\mu:a)
    \int_{G/H}\phi(x)\overline{\Restriv(Q:\mu:x)(\delta_{w})}\,dx,
$$
with $a\in A_{\fq}^{-}$ and $w\in\cW$. We used in the derivation of this formula that $\Phi_{w}(\mu:\dotvar)$ is real valued for $\mu\in\fa_{\fq}^{*+}$.
Since $\Restriv(Q:\mu)(\delta_{w})\in\cC_{\ds}(G/H)^{K}$, it follows in view of (\ref{eq Restrivs orthogonal}) that for $v,w\in\cW$ and $a\in A_{\fq}^{-}$
\begin{align*}
&\Restriv(Q:\mu:w^{-1}aw)(\delta_{v})\\
&\qquad=c_{\mu}\Phi_{w}(\mu:a)
    \int_{G/H}\Restriv(Q:\mu:x)(\delta_{v})\overline{\Restriv(Q:\mu:x)(\delta_{w})}\,dx.
\end{align*}
In particular it follows that there exist constants $c_{v,w}\in\C$ so that
$$
\Restriv(Q:\mu:kawh)(\delta_{v})=c_{v,w}\Phi_{w}(\mu:a)
\quad\big(k\in K, a\in A_{\fq}^{-},h\in H, v,w\in\cW \big).
$$
Let $v_{0}$ be the non-trivial element in $\cW$. Note that for every $w\in\cW$ the restricted functions $\Restriv(Q:\mu)(\delta_{e})\big|_{KA_{\fq}^{-}wH}$ and $\Restriv(Q:\mu)(\delta_{v_{0}})\big|_{KA_{\fq}^{-}wH}$  are linearly dependent.
Since the $\Restriv(Q:\mu)(\delta_{v})$ are $K$-fixed (hence $K$-finite) vectors in an irreducible subrepresentation of $L^{2}(G/H)$, they are analytic vectors and hence real analytic functions on $G/H$. It follows that $c_{v,w}$ is independent of $w\in\cW$ and thus that $\Restriv(Q:\mu)(\delta_{e})$ and $\Restriv(Q:\mu)(\delta_{v_{0}})$ are linearly dependent. Therefore, the right-hand side of (\ref{eq C_pi subseteq span Restriv}) is $1$-dimensional. This implies that $\dim\cC_{\pi}(G/H)^{K}=1$ and that $\pi$ occurs in the Plancherel decomposition of $G/H$ with multiplicity $1$.
\end{proof}

 \bibliographystyle{plain}
 \newcommand{\SortNoop}[1]{}\def\dbar{\leavevmode\hbox to 0pt{\hskip.2ex
  \accent"16\hss}d}

\def\adritem#1{\hbox{\small #1}}
\def\distance{\hbox{\hspace{0.5cm}}}
\def\apetail{@}
\def\addVdBan{\vbox{
\adritem{E.~P.~van den Ban}
\adritem{Mathematical Institute}
\adritem{Utrecht University}
\adritem{PO Box 80 010}
\adritem{3508 TA Utrecht}
\adritem{The Netherlands}
\adritem{E-mail: E.P.vandenBan{\apetail}uu.nl}
}
}
\def\addKuit{\vbox{
\adritem{J.~J.~Kuit}
\adritem{Institut f\"ur Mathematik}
\adritem{Universit\"at Paderborn}
\adritem{Warburger Stra{\ss}e 100}
\adritem{33089 Paderborn}
\adritem{Germany}
\adritem{E-mail: j.j.kuit{\apetail}gmail.com}
}
}
\def\addSchlichtkrull{\vbox{
\adritem{H.~Schlichtkrull}
\adritem{Dep. of Mathematical Sciences}
\adritem{University of Copenhagen}
\adritem{Universitetsparken 5}
\adritem{2100 K\o benhavn \O}
\adritem{Denmark}
\adritem{E-mail: schlicht{\apetail}math.ku.dk}
}
}
\mbox{}
\vfill
\hbox{\vbox{\addVdBan}\vbox{\distance}\vbox{\addKuit}\vbox{\distance}\vbox{\addSchlichtkrull}}

\end{document}